\documentclass[12pt,onesided]{article}

\usepackage[leqno]{amsmath}
\usepackage{amsthm}
\usepackage{amssymb}
\usepackage{amsfonts}
\usepackage{array}
\usepackage{hyperref}

\pagestyle{plain}

\topmargin-10mm
\textheight230mm
\textwidth160mm
\oddsidemargin0mm

\sloppy

\newcommand{\f}{\mathcal{F}}
\newcommand{\p}{\mathbb{P}}

%\renewcommand{\theequation}{\arabic{section}.\arabic{equation}}
%\numberwithin{equation}{section}

\newtheorem{theorem}{THEOREM}

\newtheorem{definition}{DEFINITION}

\def\bi{\begin{itemize}}
\def\ei{\end{itemize}}

\setcounter{tocdepth}{3}

%%%%%%%%%%%%%%%%%%%%%%%%%%%%%%%%%%%%%%%%%%%%%%%%%%%%%%%%%%%%
%%%%%%%%%%%%%%%%%%%%%%%%%%%%%%%%%%%%%%%%%%%%%%%%%%%%%%%%%%%%

\begin{document}

\title{Stopping times are hitting times:\\ a natural representation}
\author{Tom Fischer\thanks{Institute of Mathematics, University of Wuerzburg, 
Campus Hubland Nord, Emil-Fischer-Strasse 30, 97074 Wuerzburg, Germany.
Tel.: +49 931 3188911.
E-mail: {\tt tom.fischer@uni-wuerzburg.de}.
}\\
{University of Wuerzburg}} 
\date{This version: \today\\First version: December 7, 2011}
\maketitle

\begin{abstract}
There exists a simple, didactically useful one-to-one relationship between stopping times 
and adapted c\`adl\`ag (RCLL) processes that are non-increasing and take the values 0 and 1 only. 
As a consequence, stopping times are always hitting times. 
\end{abstract}

\noindent{\bf Key words:} 
Adapted processes, hitting times, stopping times.\\

\noindent{\bf MSC2010:} 60G40, 97K50, 97K60.\\

%%%%%%%%%%%%%%%%%%%%%%%%%%%%%%%%%%%%%%%%%%%%%%%%%%%%%%%%%%%%%%%%%%%%%%%%%%%%%%%%%%%%%%%%%%%%

The possibly most popular example of a stopping time is the `hitting time' of a set by a
stochastic process, defined as the first time at which a certain pre-specified set is hit 
by the considered process. Often this example is
considered for a Borel-measurable set and a one-dimensional real-valued adapted process on 
a discrete time axis.
Similar results in more general set-ups are usually collectively called the {\em D\'ebut} Theorem;
see Bass (2010, 2011) for a proof.

Astonishingly, it seems to be less widely taught (and maybe known) that the inverse is true as well: 
for any stopping time there exists an adapted stochastic process and
a Borel measurable set such that the corresponding hitting time will be exactly this stopping time.
Furthermore, the stochastic process can be chosen very intuitively: 
it will be 1 until just before the stopping time is reached, from which on it will be 0.
The 1-0-process therefore first hits the Borel set $\{0\}$ at the stopping time. 
As such, a one-to-one relationship is established between stopping times and adapted 
c\`adl\`ag processes that are non-increasing and take the values 0 and 1 only. 

A 1-0-process as described above is obviously akin to a random traffic light which can go through 
three possible scenarios over time (here, `green' stands for `$1$' and `red' stands for `$0$'):
(i) it stays red forever (`stopped immediately');
(ii) it is green at the beginning, then turns red, and stays red for the rest of the time (`stopped at some stage');
(iii) it stays green forever (`never stopped').
So, the traffic light can never change back to green once it has turned red 
(stopped once means stopped forever), and, for the adaptedness of
the 1-0-process, it can only change based on information up to the corresponding point in time.
This very intuitive interpretation of a stopping time as the time when such a `traffic light'
changes is considerably easier to understand than the concept of a random time which is 
`known once it has been reached' -- one of the verbal interpretations of the usual standard definition 
of a stopping time (see Def.~\ref{def_1} below).

While this representation and alternative definition of stopping times seems natural and didactically useful, it does not 
seem to be widely taught, or otherwise one would expect to find it in textbooks. 
However, there is no mentioning of it in standard textbooks on probability such as 
Billingsley (1995) or Bauer (2001) 
(Bauer as an example of a popular German stochastics textbook) or in
standard textbooks on stochastic processes/calculus and stochastic mathematical finance 
such as Karatzas and Shreve (1991) or Bingham and Kiesel (2004).\\

We denote the time axis by $\mathbb{T}$, where $\mathbb{T}\subset\mathbb{R}$.

\begin{definition}
\label{def_1}
A stopping time w.r.t.~to a filtration $\mathbb{F}=(\f_t)_{t\in\mathbb{T}}$ 
on a probability space $(\Omega, \f_\infty, \p)$ is a random variable
with values in $\mathbb{T}\cup\{+\infty\}$ such that
\begin{equation}
\label{st8}
\{\tau \leq t\} \in \f_t  \quad (t\in\mathbb{T}) .
\end{equation}
\end{definition}

\eqref{st8} means that at any time $t\in\mathbb{T}$ one knows -- based on information up to time $t$ -- 
if one has been stopped already, or not. Note that $\f_t \subset \f_{\infty}$ is assumed for $t\in\mathbb{T}$.

In the following, we call a real-valued stochastic process c\`adl\`ag 
(French:~{\em continue \`a droite, limit\'ee \`a gauche}) if for {\em all} $\omega\in\Omega$ the paths 
$X_{\cdot}(\omega): \mathbb{T} \rightarrow \mathbb{R}$
have the property of being right-continuous with left-handed limits (RCLL).

\begin{definition}
We call an adapted c\`adl\`ag process 
$X = (X_t)_{t\in\mathbb{T}}$ on $(\Omega, \f_\infty, \mathbb{F}, \p)$
a `stopping process' if
\begin{equation}
\label{st3}
X_t(\omega) \in \{0, 1\} \quad (\omega\in\Omega,\, t\in\mathbb{T})
\end{equation} and
\begin{equation}
\label{st4}
\quad X_s \geq X_t \quad (s \leq t;\, s,t\in\mathbb{T}) .
\end{equation}
\end{definition}

Obviously, $\{X_t = 1\}\in\f_t$ for $t\in\mathbb{T}$. For a finite or infinite discrete time axis given by 
$\mathbb{T} = \{t_k:\; k\in\mathbb{N},\, t_k \geq t_j \text{ if } k \geq j\}$, an
adapted process fulfilling \eqref{st3} and \eqref{st4} is automatically c\`adl\`ag. 
This follows from the observation
that in this case $\mathbb{T}$ is bounded from below and either (1) finite as a set, or 
(2) is infinite and has one accumulation point, which lies not in $\mathbb{T}$, and therefore is bounded, or (3)
has no accumulation point and is unbounded from above.

\begin{definition}
For a stopping process $X$ on $(\Omega, \f_\infty, \mathbb{F}, \p)$, define
\begin{equation}
\label{st6}
\tau^X (\omega) = 
\begin{cases}
+\infty & \quad \text{if } X_t(\omega) = 1 \; \text{ for all } t\in\mathbb{T}, \\
\min \{t\in\mathbb{T}: X_t(\omega) = 0\} & \quad \text{otherwise} .
\end{cases}
\end{equation}
\end{definition}

The minimum in the lower case exists because of the c\`adl\`ag property for each path.
By definition, it is clear that
\begin{equation}
\label{st2}
X_t = \mathbf{1}_{\{\tau^X > t\}} \quad (t\in\mathbb{T}),
\end{equation}
which by adaptedness of $X$ and $\{\tau^X > t\} = \{\tau^X \leq t\}^C$ implies 
\begin{equation}
\label{st7}
\{\tau^X \leq t\} \in \f_t  \quad (t\in\mathbb{T}) .
\end{equation}
Therefore, $\tau^X$ is a stopping time for any stopping process $X$. Clearly,
$\tau^X$ is the first time of $X$ hitting the Lebesgue-measurable set $\{0\}$.

\begin{definition}
For a stopping time $\tau$, define a stochastic process $X^{\tau}=(X^{\tau}_t)_{t\in\mathbb{T}}$ by
\begin{equation}
\label{st22}
X^{\tau}_t = \mathbf{1}_{\{\tau > t\}} \quad (t\in\mathbb{T}) .
\end{equation}
\end{definition}

By $\{\tau > t\} = \{\tau \leq t\}^C$ and \eqref{st8}, $X^{\tau}$ is an adapted process.
One has $X^{\tau}_s = \mathbf{1}_{\{\tau > s\}} \geq \mathbf{1}_{\{\tau > t\}} = X^{\tau}_t$
for $s \leq t$. Because of 
$\lim_{t\downarrow\tau(\omega)} X^{\tau}_t(\omega) = 0 = X^{\tau}_{\tau(\omega)}$,
$X^{\tau}$ is c\`adl\`ag and hence a stopping process.

\begin{theorem}
\label{theorem1}
The mapping
\begin{eqnarray}
\label{f}
f: X & \longmapsto & \tau^X
\end{eqnarray}
is a bijection between the stopping processes and the stopping times 
on $(\Omega, \f_\infty, \mathbb{F}, \p)$ such that
\begin{eqnarray}
\label{f-1}
f^{-1}: \tau & \longmapsto & X^{\tau} ,
\end{eqnarray}
and hence
\begin{equation}
\label{tau_tau}
\tau^{X^{\tau}} = \tau \quad \text{and} \quad X^{\tau^X} = X .
\end{equation}
\end{theorem}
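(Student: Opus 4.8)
The plan is to establish the bijection by exhibiting a two-sided inverse: I will verify directly that the two compositions $f^{-1}\circ f$ and $f\circ f^{-1}$ are the respective identity maps, which is precisely the content of the two identities in \eqref{tau_tau}. Since the excerpt already shows that $X\mapsto\tau^X$ sends stopping processes to stopping times (via \eqref{st2} and \eqref{st7}) and that $\tau\mapsto X^\tau$ sends stopping times to stopping processes, both maps are well-defined on the stated domains, so it remains only to check that they undo one another.

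For the identity $X^{\tau^X}=X$, I would simply unwind the definitions. Applying \eqref{st22} to the stopping time $\tau^X$ gives $X^{\tau^X}_t=\mathbf{1}_{\{\tau^X>t\}}$ for every $t\in\mathbb{T}$. But \eqref{st2} already records that $X_t=\mathbf{1}_{\{\tau^X>t\}}$, so the two processes agree for all $\omega$ and all $t$. This direction is therefore immediate from the established representation \eqref{st2} and requires no additional work.

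For the identity $\tau^{X^\tau}=\tau$, I would argue pointwise in $\omega$, splitting into the two cases of \eqref{st6}. Fix $\omega$ and recall from \eqref{st22} that $X^\tau_t(\omega)=1$ exactly when $\tau(\omega)>t$ and equals $0$ exactly when $\tau(\omega)\leq t$. If $\tau(\omega)=+\infty$, then $X^\tau_t(\omega)=1$ for all $t\in\mathbb{T}$, so the upper branch of \eqref{st6} yields $\tau^{X^\tau}(\omega)=+\infty=\tau(\omega)$. If instead $\tau(\omega)=s$ is finite, then $s\in\mathbb{T}$ and $\{t\in\mathbb{T}:X^\tau_t(\omega)=0\}=\{t\in\mathbb{T}:t\geq s\}$; since $s$ itself lies in this set, its minimum is $s$, whence the lower branch of \eqref{st6} gives $\tau^{X^\tau}(\omega)=s=\tau(\omega)$.

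Having verified both compositions, I would conclude that $f$ is a bijection with inverse $f^{-1}:\tau\mapsto X^\tau$. The argument is essentially bookkeeping, and the only point deserving care is the finite case above: one must use that a finite value of $\tau$ necessarily lies in $\mathbb{T}$, so that $s$ is an admissible hitting time, and that the minimum in \eqref{st6} is genuinely attained at $s$ rather than merely approached. This is where the c\`adl\`ag hypothesis, already invoked to guarantee existence of the minimum, does its work; I expect no genuine obstacle to arise beyond keeping the two branches of the definition straight.
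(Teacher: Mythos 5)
Your proof is correct and takes essentially the same route as the paper: both identities in \eqref{tau_tau} are verified by unwinding the definitions \eqref{st6} and \eqref{st22} pointwise in $\omega$, with the same observation that $\mathbf{1}_{\{\tau>t\}}(\omega)=0$ exactly when $\tau(\omega)\leq t$, so that in the finite case the minimum is attained at $\tau(\omega)$ itself. The only difference is packaging --- you deduce bijectivity from the two-sided-inverse criterion, while the paper notes injectivity separately (from \eqref{st6}) and gets surjectivity from $\tau^{X^{\tau}}=\tau$ --- but the underlying computations are identical.
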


\begin{proof}
That $f$ maps stopping processes $X$ to stopping times $\tau^X$ was seen in \eqref{st7}.
That different stopping processes lead to different stopping times under $f$ is obvious 
from \eqref{st6}; $f$ is therefore an injection.
For any stopping time $\tau$, one has by \eqref{st6} and \eqref{st22} that 
\begin{equation}
\label{st9}
\tau^{X^{\tau}} (\omega)  = 
\begin{cases}
+\infty & \quad \text{if } \mathbf{1}_{\{\tau>t\}}(\omega) = 1 \; \text{ for all } t\in\mathbb{T} , \\
\min \{t\in\mathbb{T}: \mathbf{1}_{\{\tau>t\}}(\omega) = 0\} & \quad \text{otherwise} .
\end{cases}
\end{equation}
Since $\mathbf{1}_{\{\tau>t\}}(\omega) = 0$ if and only if $\tau(\omega) \leq t$, the right
hand side of \eqref{st9} is $\tau (\omega)$. Therefore, $\tau (\omega) = \tau^{X^{\tau}} (\omega)$,
and $f$ is a surjection and therefore a bijection. 
From \eqref{st2} and \eqref{st22}, we obtain $X_t^{\tau^X} = \mathbf{1}_{\{\tau^X > t\}} = X_t$
for $t\in\mathbb{T}$, which proves \eqref{f-1}.
\end{proof}

Note that there was no identification of almost surely identical stopping times or stopping processes
in Theorem \ref{theorem1} or any of the definitions, however, one can obviously transfer
all results to equivalence classes of almost surely identical objects.

%%%%%%%%%%%%%%%%%%%%%%%%%%%%%%%%%%%%%%%%%%%%%%%%%%%%%%%%%%%%%%%%%%%%%%%%%%%%%%%%%%%%%%%%%%%%

%%%%%%%%%%%%%%%%%%%%%%%%%%%%%%%%%%%%%%%%%%%%%%%%%%%%%%%%%%%%%%%%%%%%%%%%%%%%%%%%%%%%%%%%%%%%


\begin{thebibliography}{M}
\bibitem{RFB1}
Bass, R. F. (2010): 
The Measurability of Hitting Times. 
{\it Electronic Communications in Probability} {\bf 15}, 99--105.
\bibitem{RFB2}
Bass, R. F. (2011): 
Correction to ``The Measurability of Hitting Times". 
{\it Electronic Communications in Probability} {\bf 16}, 189--191.
\bibitem{HB}
Bauer, H. (2002):
Wahrscheinlichkeitstheorie. 5.~Auflage. de Gruyter.
\bibitem{PB}
Billingsley, P. (1995):
Probability and Measure. 
Third Edition. Wiley-Interscience.
\bibitem{BK}
Bingham, N. H., and R. Kiesel (2004):
Risk-Neutral Valuation. Pricing and Hedging of Financial Derivatives. 
Second Edition. Springer.
\bibitem{KS}
Karatzas, I., and S. E. Shreve (1991):
Brownian Motion and Stochastic Calculus. Second Edition. Springer.
\end{thebibliography}
\end{document}